\newcommand{\cA}{\mathcal{A}}
\newcommand{\cB}{\mathcal{B}}
\newcommand{\cM}{\mathcal{M}}
\newcommand{\MM}{\mathbb{M}}
\numberwithin{equation}{section}
\newcommand{\cX}{\mathcal{X}}
\newtheorem{theorem}{Theorem}[section]
\newtheorem{proposition}[theorem]{Proposition}
\newtheorem*{proposition*}{Proposition}
\theoremstyle{definition}
\theoremstyle{remark} \newtheorem{remark}[theorem]{Remark}
\begin{document}

\title[Characterizing convex trace ranges]{Characterizing convex trace ranges  in finite atomic von Neumann algebras}

\author[A. Arziev]{Allabay Arziev}
\address{V.I.Romanovskiy Institute of Mathematics,     Uzbekistan Academy of Sciences}
\email{allabayarziev@gmail.com}

\author[K. Kudaybergenov]{Karimbergen Kudaybergenov}
\address{North-Caucasus Center for Mathematical Research
of the Vladikavkaz Scientific Centre of the Russian Academy of
Sciences}
\email{kudaybergenovkk@gmail.com}

\begin{abstract} The paper is devoted to characterizing convex trace ranges  in finite atomic von Neumann algebras.
The main result provides us with the necessary and sufficient condition for the range of a faithful normal trace on a finite atomic von Neumann algebra to be convex.
In order to prove this result we will prove the following result, which has independent interest.
Let ${\bf a}=(a_1, \ldots, a_n, \ldots)$ be a non-increasing positive sequence such that $\sum\limits_{n=1}^\infty a_n=1.$ Then each real number $0\le r \le 1$ can be represented in the form
\(
r=\sum\limits_{n=1}^\infty \varepsilon_n a_n, \,\,\, \varepsilon_n \in \{0,1\}, n\ge 1,
\)
if and only if the sequence ${\bf a}$ satisfies \(a_n \le 1-\sum\limits_{k=1}^n a_k
\)
for all $n\ge 1.$ A set $K$ of all sequences that satisfy the last property can be represented as a convex weak-compact subset of $\ell_1 = c_0^*$. We will describe the set of all extreme points of $K.$

\end{abstract}

\subjclass[2010]{46G10, 47C15}
\keywords{Vector measure, Lyapunov convexity theorem, Lyapunov property, von Neumann algebra.}
\maketitle

\bigskip

\section{Introduction}
Recall that
 for a
   Banach space  $\cX$ and   a measure space $(\Omega, \Sigma)$, where $\Sigma$ is a $\sigma$-algebra of subsets of $\Omega$,
the classical Lyapunov convexity theorem \cite[Theorem I]{Lia40} states that if $\cX$ is finite-dimensional and if $\mu$ is a non-atomic $\sigma$-additive $\cX$-valued measure defined on $\Sigma$ then its range $\mu(\Sigma)$
is compact and convex.
Along with the Brouwer--Kakutani--Fan--Glicksberg fixed point theorems,
and several versions of the Hahn--Banach theorem, Lyapunov's theorem~\cite{Lia40, Lia46} on the range
of a nonatomic finite-dimensional vector measure
plays a crucial role in  modern
mathematical economics, especially in  general equilibrium and game theory (see \cite{AA91, DU77, Holmes, KK75,LT2}).

Let $\cM$ be an atomic von Neumann algebra with a faithful normal tracial state $\tau.$
We can find a sequence of mutually orthogonal minimal projections
$e_1, \ldots, e_n, \ldots$ in $\cM$ such that
$\sum\limits_{n=1}^\infty e_n=\mathbf{1}.$

Let
\begin{align}\label{tn}
t_n:=\tau(e_n),\, n \ge 1.
\end{align} If necessary, after enumerating, we may assume that
 $t_1\ge t_2\ge \ldots t_n \ge \ldots.$

Let ${\bf a}=(a_1, \ldots, a_n, \ldots)$ be a non-increasing sequence of positive  real numbers such that
\begin{align}\label{sumt_n-}
a_n & \le \sum\limits_{k=n+1}^\infty a_k
\end{align}
for all $n\ge 1.$ If the series $\sum\limits_{n=1}^\infty a_n$ converges, then we can consider the sequence ${\bf a}$ as an element
of $\ell_1,$ and set
\begin{align*}
\|{\bf a}\|_1=\sum\limits_{n=1}^\infty a_n.
\end{align*}
In particular, if $\|{\bf a}\|_1=1,$ then  relations \eqref{sumt_n-} rewrites as follows
\begin{align}\label{sumt_n}
a_n & \le 1-\sum\limits_{k=1}^n a_k
\end{align}
for all $n\ge 1.$

The next result provides us with the necessary and sufficient condition for the range of a faithful normal trace on a finite atomic von Neumann algebra to be convex.

\begin{theorem}\label{atomLya} Let  $\cM$ be an atomic von Neumann algebra with a faithful normal tracial state $\tau$
and let $\{t_n\}_{n\ge 1}$ be a sequence defined as in \eqref{tn}. Then the following assertions are equivalent.
\begin{enumerate}
\item $\tau(P(\cM))$ is convex.
\item the sequence $\{t_n\}_{n\ge 1}$ satisfies~\eqref{sumt_n}.
\end{enumerate}
\end{theorem}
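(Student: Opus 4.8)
The plan is to reduce Theorem~\ref{atomLya} to the representation result for sequences announced in the abstract, the bridge being the observation that $\tau(P(\cM))$ is exactly the set of subset-sums $\bigl\{\sum_{n\ge 1}\veps_n t_n : \veps_n\in\{0,1\}\bigr\}$. Once this identity is in hand, the equivalence $(1)\Leftrightarrow(2)$ is almost immediate.

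First I would record the structure of $\cM$. Being atomic, $\cM$ splits as a (possibly infinite) direct sum $\bigoplus_i \cM_i$ of type~I factors $\cM_i\cong\Mat_{n_i}(\CC)$, and finiteness of $\tau$ forces each $n_i$ to be finite: a faithful normal tracial state cannot vanish on a minimal projection, yet in an infinite factor the infinitely many mutually orthogonal minimal projections would all share one trace $s_i$ with $N s_i\le\tau(\mathbf{1})=1$ for every $N$, forcing $s_i=0$. All minimal projections inside one block are Murray--von Neumann equivalent, so they share a common trace $s_i$, and the block contributes exactly $n_i$ of the atoms $e_n$. Thus the multiset $\{t_n\}$ is the disjoint union over $i$ of $n_i$ copies of $s_i$, and after the non-increasing rearrangement we recover the sequence of the theorem with $\sum_n t_n=\tau(\mathbf{1})=1$.

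Next I would identify the range. For any projection $P=(P_i)_i\in P(\cM)$ one has $\tau(P_i)=\rank(P_i)\,s_i$, since in a matrix algebra the trace of a projection depends only on its rank; summing gives $\tau(P)=\sum_i\rank(P_i)\,s_i$, which is precisely a subset-sum of the $t_n$ (select $\rank(P_i)$ of the $n_i$ atoms in block $i$). Conversely, every $\{0,1\}$-sequence $\veps$ produces the projection $\sum_{\veps_n=1}e_n\in P(\cM)$ of trace $\sum_n\veps_n t_n$, the sum of orthogonal atoms converging strongly. Hence $\tau(P(\cM))=\bigl\{\sum_{n\ge1}\veps_n t_n:\veps_n\in\{0,1\}\bigr\}$. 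I expect this identification --- in particular the verification that \emph{every} projection is already accounted for by a ``diagonal'' subset-sum --- to be the main substantive step; everything else is formal.

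Finally I would close the equivalence. The set $\tau(P(\cM))$ lies in $[0,1]$ and contains both $0=\tau(0)$ and $1=\tau(\mathbf{1})$, so it is convex if and only if it equals all of $[0,1]$, that is, if and only if every $r\in[0,1]$ admits a representation $r=\sum_n\veps_n t_n$ with $\veps_n\in\{0,1\}$. By the representation result announced in the abstract, applied to the non-increasing summable sequence $\{t_n\}$, this holds exactly when $t_n\le 1-\sum_{k=1}^n t_k$ for all $n$, i.e. exactly when \eqref{sumt_n} is satisfied. This yields $(1)\Leftrightarrow(2)$ and completes the proof.
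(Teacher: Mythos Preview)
Your proposal is correct and follows the same overall architecture as the paper: both arguments establish the identity
\[
\tau(P(\cM))=\Bigl\{\sum_{n\ge 1}\veps_n t_n : \veps_n\in\{0,1\}\Bigr\}
\]
and then invoke the representation theorem (Theorem~\ref{ret}) to conclude.

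The only genuine difference is in how that identity is obtained. The paper fixes a masa $\cA\subset\cM$, cites the Sinclair--Smith result that all masas in a finite factor are unitarily conjugate to deduce that every projection in $\cM$ is unitarily equivalent to one in $\cA$, and reads off $\tau(P(\cM))=\tau(P(\cA))$; the subset-sum description then falls out of the atomic structure of $\cA$. You instead work blockwise in the decomposition $\cM\cong\bigoplus_i\Mat_{n_i}(\CC)$ and use directly that the trace of a projection in a matrix algebra is determined by its rank, so $\tau(P)=\sum_i\rank(P_i)\,s_i$. Your route is more elementary---it avoids the external citation on masa conjugacy---while the paper's route is more structural and would generalize more cleanly if one wanted to replace the blockwise computation by an abstract argument. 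Either way the reduction is short, and your expectation that this is ``the main substantive step'' is perhaps overstated: once the type~I structure is on the table, both arguments are one line.
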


\section{Proof of the main result}

Let  $\cM$ be an atomic von Neumann algebra with a faithful normal tracial state $\tau.$ Then  it has the form
\begin{align*}
\cM & \equiv \bigoplus_{k\in F}\MM_k,
\end{align*}
where $\MM_k$ is the factor of type I$_k$ and $F$ is an infinite subset of the integers $\mathbb{N}.$

Let $\cA$ be a maximal abelian von Neumann subalgebra (masa) in $\cM.$
By \cite[Theorem 2.4.3 and Remark 2.4.4]{SS08}, for any masa $\cA$ in $\cM$ there is a unitary $w\in \cM$ such that $\cA=w\cB w^\ast.$
Since any projection $e$ in $\cM$ is contained in some masa, there exists a projection $q\in \cA$ such that
 $e\sim q,$ that is, $e=vqv^\ast$ for a unitary $v\in \cM.$  Thus,
 \begin{align*}
\tau(e) & =\tau(vqv^\ast)=\tau(q).
 \end{align*}
 Hence,
 \begin{align}\label{ED}
\tau\left(P(\cA)\right)=\tau\left(P(\cM)\right).
 \end{align}
Since $\cA$ is an atomic masa, we can find a sequence of mutually orthogonal minimal projections
$q_1, \ldots, q_n, \ldots$ in $\cA$ such that
$\sum\limits_{n=1}^\infty q_n=\mathbf{1}.$ Moreover, any projection $q\in \cA$ can be represented in the form of
\begin{align}\label{qn}
q=\sum\limits_{n=1}^\infty \varepsilon_n q_n,
\end{align}
 where $\varepsilon_n \in \{0,1\}$ for all $n\ge 1.$ Furthermore, instead the sequence $\{e_n\}_{n\ge 1}$ we can consider the sequence
 $\{q_n\}_{n\ge1},$ that is, we can set
\begin{align*}
t_n:=\tau(q_n),\, n \ge 1.
\end{align*}
If necessary, after enumerating, we may assume that
 $t_1\ge t_2\ge \ldots t_n \ge \ldots.$
Combing \eqref{ED} and \eqref{qn} we have that
\begin{align}\label{equal}
\tau(P(\cM)) & =\left\{\sum\limits_{n=1}^\infty \varepsilon_n t_n: \varepsilon_n \{0,1\}, n\ge 1\right\}.
\end{align}

Taking into account equation \eqref{equal}, we see that Theorem \ref{atomLya} follows directly from the following result, which has independent interest.

\begin{theorem}\label{ret} Let ${\bf a}=(a_1, \ldots, a_n, \ldots)$ be a positive sequence such that $\|{\bf a}\|_1=1.$ Then each real number $0\le r \le 1$ can be represented in the form
\begin{align}\label{dec-r}
r=\sum\limits_{n=1}^\infty \varepsilon_n a_n, \,\,\, \varepsilon_n \in \{0,1\}, n\ge 1,
\end{align}
if and only if the sequence ${\bf a}$ satisfies \eqref{sumt_n}.
\end{theorem}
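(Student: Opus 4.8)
The plan is to recognize this as (a form of) Kakeya's theorem on the subset sums of a summable sequence, and to prove the two implications separately. Throughout I write $R_n := \sum_{k>n} a_k = 1-\sum_{k=1}^n a_k$ for the tail, so that condition \eqref{sumt_n} reads $a_n \le R_n$ for all $n$, and I note that $R_n \to 0$ since $\|\mathbf{a}\|_1 = 1$. Since the hypotheses of Theorem \ref{ret} are inherited from the enumeration $t_1 \ge t_2 \ge \cdots$, I also use that $\mathbf{a}$ is non-increasing; this monotonicity is genuinely needed for the ``only if'' direction, because reordering a filling sequence can destroy \eqref{sumt_n} while preserving the set of subset sums.

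For the ``if'' direction I would run a greedy algorithm. Fix $r \in [0,1]$, set $\rho_0 := r$, and for $n \ge 1$ put $\varepsilon_n = 1$ and $\rho_n := \rho_{n-1}-a_n$ when $\rho_{n-1} \ge a_n$, and $\varepsilon_n = 0$, $\rho_n := \rho_{n-1}$ otherwise. The crux is the invariant $0 \le \rho_n \le R_n$ (with $R_0 = 1$), which I prove by induction: when we subtract $a_n$ it follows from $R_{n-1} = a_n + R_n$, and when we do not subtract (because $\rho_{n-1} < a_n$) it follows precisely from the hypothesis $a_n \le R_n$. Since $R_n \to 0$, the partial sums $\sum_{k\le n}\varepsilon_k a_k = r-\rho_n$ converge to $r$, yielding \eqref{dec-r}.

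For the ``only if'' direction I would argue by contraposition: assume $a_N > R_N$ for some $N$ and exhibit an $r \in [0,1]$ admitting no representation. Put $T_N := \sum_{k=1}^N a_k$ and let $P$ be the finite set of subset sums of $\{a_1,\ldots,a_N\}$; then every element of $E := \{\sum \varepsilon_n a_n : \varepsilon_n \in \{0,1\}\}$ lies in $\bigcup_{p\in P}[p,\,p+R_N]$. Monotonicity enters here: since $a_k \ge a_N$ for all $k \le N$, every nonempty sub-sum of the first $N$ terms is at least $a_N$, so $P$ contains no point strictly between $T_N - a_N$ and $T_N$ (while $T_N-a_N$ and $T_N$ themselves lie in $P$). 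Consequently no interval $[p,p+R_N]$ with $p \in P$ can meet the open interval $(T_N - a_N + R_N,\; T_N)$, which is nonempty exactly because $a_N > R_N$ and sits inside $[0,1]$. Any $r$ in this gap, e.g. $r := T_N - \tfrac12(a_N - R_N)$, therefore fails \eqref{dec-r}.

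The main obstacle is the ``only if'' direction: isolating where the non-increasing hypothesis is used and, above all, verifying that the manufactured gap is not ``refilled'' by some other prefix sum $p \in P$. Once the covering $E \subseteq \bigcup_{p\in P}[p,p+R_N]$ and the emptiness of $P$ near the top of $[0,T_N]$ are established, the contradiction is immediate; the greedy ``if'' direction is routine after the correct invariant is identified.
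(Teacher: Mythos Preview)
Your proposal is correct. The ``if'' direction is essentially identical to the paper's: both run the greedy algorithm and verify the invariant $0 \le \rho_n \le R_n$ by induction, invoking \eqref{sumt_n} precisely in the step where $\varepsilon_n=0$.

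For the ``only if'' direction your argument is sound but more elaborate than the paper's. The paper simply picks $r$ in the interval $(R_N,\,a_N)$ and checks two cases: if some $\varepsilon_k=1$ with $k\le N$ then $r\ge a_k\ge a_N>r$, while if all such $\varepsilon_k$ vanish then $r\le R_N<r$. Your gap $(T_N-a_N+R_N,\,T_N)=(1-a_N,\,1-R_N)$ is the mirror image of the paper's gap under the symmetry $r\mapsto 1-r$ of the subset-sum set, so the two arguments are really dual; but your route through the covering $E\subseteq\bigcup_{p\in P}[p,p+R_N]$ and the complementary-subset observation to locate the hole in $P$ is more machinery than needed. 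The payoff of your formulation is that it makes the structure of $E$ as a union of translates of $[0,R_N]$ explicit, which would generalize nicely; the payoff of the paper's is brevity---three lines, no covering, no auxiliary set $P$.
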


\begin{proof} 'if' part. Let $r \in [0,1].$
Define  sequences
$\{\varepsilon_0, \ldots, \varepsilon_n,\ldots\}\subset \{0, 1\}$ and
$\{r_0, \ldots, r_n, \ldots\}\subset [0,1]$ by the following recurrence formula:
\begin{align*}
\varepsilon_0 & = 0,\,\,\, r_0 =0,\\
\varepsilon_n & =\begin{cases}
			0, & \text{if    } r-r_{n-1}<a_n\\
            1, & \text{otherwise}
		 \end{cases}, & n \ge 1,\\
r_n & = r_{n-1}+\varepsilon_n a_n, & \,\,  n\ge 1.
\end{align*}
Let us show that
\begin{align*}
r=\sum\limits_{n=1}^\infty \varepsilon_n a_n.
\end{align*}
It suffices to show that
\begin{align}\label{normxn}
0 \le &\,  r-r_n  \le 1-\sum\limits_{k=1}^n a_k,\, n\ge 1.
\end{align}
Let $n=1.$ If $a_1>r-r_0=r,$ we have that $\varepsilon_1=0.$ Hence, $r_1=r_0+\varepsilon_1a_1=0.$ Thus
\begin{align*}
0\le r-r_1 & = r <  a_1=1-\sum\limits_{k=2}^\infty a_k\stackrel{\eqref{sumt_n}}{\le} 1-a_1.
\end{align*}
If $a_1 \le r-r_0,$ we have that $\varepsilon_1=1$ and $r_1=r_0+a_1=a_1.$ Further, we obtain that
\begin{eqnarray*}
0\le r-r_1 & = & r-a_1\le 1-a_1,
\end{eqnarray*}
because $0\le r\le 1.$

Now,
 assume that \eqref{normxn} holds for $n-1,$
that is,
\begin{align}\label{indn-1}
0\le & \, r-r_{n-1} \le 1-\sum\limits_{k=1}^{n-1}a_k.
\end{align}

If $a_n>r-r_{n-1},$ we have that $\varepsilon_n=0,$ and therefore $r_n=r_{n-1}+\varepsilon_n a_n=r_{n-1}.$ Therefore
\begin{align*}
0\le r-r_n  & = r-r_{n-1} <  a_n\stackrel{\eqref{sumt_n}}{\le} 1-\sum\limits_{k=1}^{n}a_k.
\end{align*}
If $a_n\le r-r_{n-1},$ we have that $\varepsilon_n=1,$ and $r_n=r_{n-1}+a_n.$ Hence
\begin{eqnarray*}
0 & \le & r-r_n =r-(r_{n-1}+a_n)=r-r_{n-1}-a_n \stackrel{\eqref{indn-1}}{\le}1-\sum\limits_{k=1}^{n-1} a_k-a_n=1-\sum\limits_{k=1}^n a_k.
\end{eqnarray*}
Since $\sum\limits_{k=1}^\infty a_k=1,$ we have $r_n \uparrow r,$
and the proof of the 'if' is complete.

'only if' part.
 Assume by contrary, that is, there exists $n$ such that
 \[
\sum\limits_{k=n+1}^\infty a_k< a_n.
 \]
 Let us take a number $r$ such that
 \[
 \sum\limits_{k=n+1}^\infty a_k<r<a_n.
 \]
By the assumption we find expansion
\[
r: =\sum\limits_{k=1}^\infty \varepsilon_k a_k.
\]
If $\varepsilon_k=1$ for some $k\in \{1, \ldots, n\},$ we have that
\begin{align*}
r & \ge a_k\ge a_n>r.
\end{align*}
If $\varepsilon_k=0$ for all  $k\in \{1, \ldots, n\},$ we have that
\begin{align*}
r & =\sum\limits_{k=n+1}^\infty \varepsilon_k a_k
\le \sum\limits_{k=n+1}^\infty a_k<r.
\end{align*}
So, in both cases we have contradiction. Hence, the sequence ${\bf a}$ satisfies \eqref{sumt_n}.
The proof is completed.
\end{proof}

\subsection{Extreme boundary}

\

Consider the set $K$ of all sequences with the property \eqref{sumt_n} with the sum no larger  than~$1.$
Then $K$ can be seen as a subset of $\ell_1\equiv c_0^*.$
It is clear that $K$ is bounded, and closed with respect to weak topology on $\ell_1.$
So, it is a convex $(c_0, \ell_1)$-compact set. By the Krein-Milman theorem $K$ is a weak-closure of the
convex hull of extreme boundary, that is, $K=\overline{{\rm co}({\rm ext}K)}^{w}.$

In this subsection we describe the extreme boundary ${\rm ext}K$ of $K.$

Let $k_1, \ldots, k_n, \ldots$ be a sequence of integers such that $k_n\ge 2$ for all $n\ge 1.$
Set

\begin{align}\label{k-1}
{\bf a} & =\left(\underbrace{\frac{1}{k_1}, \ldots, \frac{1}{k_1}}_{k_1-1}, \underbrace{\frac{1}{k_1k_2}, \ldots, \frac{1}{k_1k_2}}_{k_2-1},\ldots, \underbrace{\frac{1}{k_1\cdots k_n}, \ldots, \frac{1}{k_1\cdots k_n}}_{k_n-1}, \ldots\right).
\end{align}

\begin{theorem}\label{extreme}
Let ${\bf a}\in K.$ Then ${\bf a}$ is an extreme point of $K$ if and only if it is in the form \eqref{k-1}.
\end{theorem}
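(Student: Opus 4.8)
The plan is to read off extremality from the admissible perturbation directions at a point. A point $\mathbf{a}\in K$ fails to be extreme exactly when there is a nonzero $\mathbf{h}\in\ell_1$ with $\mathbf{a}+\mathbf{h}\in K$ and $\mathbf{a}-\mathbf{h}\in K$, equivalently with $\mathbf{a}\pm\varepsilon\mathbf{h}\in K$ for some $\varepsilon>0$. I would first determine, for a given $\mathbf{a}$, which of the inequalities cutting out $K$ are active: the positivity $a_n\ge 0$, the monotonicity $a_n\ge a_{n+1}$, the conditions \eqref{sumt_n}, and $\sum_n a_n\le 1$. The decisive observation is a rigidity lemma for admissible directions: since $a_n=a_{n+1}$ makes $a_n\ge a_{n+1}$ active, the requirement $\mathbf{a}\pm\mathbf{h}\in K$ forces $h_n\ge h_{n+1}$ and $h_n\le h_{n+1}$, whence $h_n=h_{n+1}$; thus \emph{$\mathbf{h}$ is constant on every maximal block of equal entries of $\mathbf{a}$}. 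Likewise $\mathbf{h}$ vanishes wherever $a_n=0$, and $\mathbf{h}$ must annihilate every active instance of \eqref{sumt_n} and the mass functional $\mathbf{a}\mapsto\sum_n a_n$ when $\|\mathbf{a}\|_1=1$. Encoding $\mathbf{h}$ by one scalar $\eta_j$ per block then turns the whole question into a linear system indexed by the active block-ends.

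For the \textbf{if} direction, take $\mathbf{a}$ as in \eqref{k-1}, whose $j$-th block has length $k_j-1$ and value $(k_1\cdots k_j)^{-1}$, and set $N_j=\sum_{i\le j}(k_i-1)$. A short computation shows $\sum_n a_n=1$ and that \eqref{sumt_n} is active precisely at the block-ends $N_1,N_2,\dots$. Expressing these active conditions through the block scalars yields the triangular system $\sum_{i=1}^{j}(k_i-1)\eta_i+\eta_j=0$ for $j\ge 1$. The first equation reads $k_1\eta_1=0$, so $\eta_1=0$; substituting into the second gives $k_2\eta_2=0$, and induction forces $\eta_j=0$ for all $j$. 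Hence $\mathbf{h}=0$ and $\mathbf{a}$ is extreme. A variant avoids perturbations altogether: write $\mathbf{a}=\tfrac12(\mathbf{b}+\mathbf{c})$ with $\mathbf{b},\mathbf{c}\in K$, use monotonicity to force $\mathbf{b},\mathbf{c}$ constant on each block, use the averaging identity at each $N_j$ to transfer the tightness of \eqref{sumt_n} to both $\mathbf{b}$ and $\mathbf{c}$, and run the same recursion to conclude $\mathbf{b}=\mathbf{c}=\mathbf{a}$.

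For the \textbf{only if} direction I would argue contrapositively. A non-increasing full-mass element $\mathbf{a}\in K$ is of the form \eqref{k-1} if and only if \eqref{sumt_n} is active at the end of every block: the recursion above then reconstructs the values $(k_1\cdots k_j)^{-1}$ and forces infinitely many blocks, since a finite support with $\sum_n a_n=1$ would violate \eqref{sumt_n} at its last index. So suppose some block-end carries slack. If at least two block-ends are slack, I build a nonzero $\mathbf{h}$ supported on the finitely many blocks lying between two slack ends and with zero net mass; then all partial sums, hence all conditions \eqref{sumt_n}, are unchanged beyond that range, only finitely many active constraints are disturbed, each with strict slack, so a single small $\varepsilon$ works (a dimension count shows such a nonzero $\mathbf{h}$ exists). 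If exactly one block-end $j_0$ is slack, the perturbation is forced to have infinite support: set $\eta_i=0$ for $i<j_0$, take $\eta_{j_0}\ne 0$, and let the active conditions at the block-ends $>j_0$ determine $\eta_{j_0+1},\eta_{j_0+2},\dots$ recursively (the mass constraint is then automatic, being the limit of those conditions). The main obstacle, and the real crux, is to show that this $\mathbf{h}$ lies in $\ell_1$ and that a single $\varepsilon>0$ keeps $\mathbf{a}\pm\varepsilon\mathbf{h}$ feasible against the infinitely many constraints; this reduces to checking that $|\eta_j|$ decays in proportion to the tail values of $\mathbf{a}$, i.e. $|h_n|\le C a_n$ uniformly, which the recursion built from the active conditions delivers. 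The remaining, genuinely delicate point is the boundary case $\|\mathbf{a}\|_1<1$ (mass escaping to infinity in the weak-$*$ limit), where finitely supported truncations of the pattern behave as degenerate limits of \eqref{k-1} and must be examined separately.
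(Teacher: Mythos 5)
Your strategy is genuinely different from the paper's: the paper never examines active constraints or perturbation directions, but instead shows that $K_m=\{\mathbf{a}\in K:a_1=\cdots=a_{m-1}=\tfrac1m\}$ is a face of $K$, that $\Pi_m$ is an affine isomorphism of $K$ onto $K_m$, and then proves a block-by-block reduction (Proposition~\ref{crit}): when $a_1=\cdots=a_{m-1}>a_m$, extremality of $\mathbf{a}$ is equivalent to $a_1=\cdots=a_{m-1}=\tfrac1m$ together with extremality of the rescaled tail $(ma_m,ma_{m+1},\ldots)$, and the theorem follows by iteration. Your \emph{if} half is correct and essentially complete: block-rigidity of an admissible direction $\mathbf{h}$ plus annihilation of the active constraints at the block-ends yields the triangular system $\sum_{i\le j}(k_i-1)\eta_i+\eta_j=0$, which forces $\mathbf{h}=0$; this is a legitimate, arguably more transparent, substitute for the paper's recursive use of the face property.

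The \emph{only if} half, however, has two genuine gaps. First, the ``dimension count'' in the case of two slack block-ends fails as you set it up: if $\mathbf{h}$ is supported on the blocks lying strictly between the slack ends $N_{j_1}<N_{j_2}$, then the conditions you impose (annihilation of the active ends $N_{j_1+1},\dots,N_{j_2-1}$ plus zero net mass) form a \emph{square} homogeneous system, and it is triangular---by exactly the computation you ran in the \emph{if} half---so its only solution is $\mathbf{h}=0$. A nonzero direction exists only after enlarging the support to include the block terminating at $N_{j_1}$, which gives one more unknown than equations; the argument is repairable this way, but as written it produces nothing. Second, you explicitly leave open the case $\|\mathbf{a}\|_1<1$, so the contrapositive is simply unproved there. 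The paper needs no case distinction on the mass: the perturbation in Proposition~\ref{crit}---inflate the first block by $1+\delta$, deflate the tail by $1-\varepsilon$ with $(m-1)\delta a=\varepsilon(1-(m-1)a)$---never uses $\|\mathbf{a}\|_1=1$, so the reduction disposes of any point at the first block whose common value is $<\tfrac1m$, regardless of total mass. Moreover, your instinct that this case is delicate is correct for a sharper reason than you give: if $K$ is taken literally as the weakly compact set described in the paper (hence containing sequences with vanishing tail), then, for example, $\mathbf{a}=(1/2,1/4,0,0,\ldots)$ is an extreme point of $K$ (the active constraints at $n=1,2$ together with nonnegativity force any admissible $\mathbf{h}$ to vanish) and is not of the form \eqref{k-1}; so the deferred case is exactly where the statement needs strictly positive entries, and no argument can close it without engaging with that point.
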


Let us first check that a sequence  ${\bf a}$ defined as in \eqref{k-1} belong to  $K.$

\begin{proposition}\label{inK}
A sequence  ${\bf a}$ defined as in \eqref{k-1} is  in $K.$
\end{proposition}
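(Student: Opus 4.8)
The plan is to verify the two defining requirements of $K$ directly: that $\|{\bf a}\|_1\le 1$ and that ${\bf a}$ satisfies \eqref{sumt_n}. To organize the computation I would introduce the partial products $P_0:=1$ and $P_n:=k_1\cdots k_n$ for $n\ge 1$, so that the $n$-th block of \eqref{k-1} consists of exactly $k_n-1$ equal entries, each equal to $1/P_n$. The single algebraic fact driving everything is the telescoping identity
\[
\frac{k_n-1}{P_n}=\frac{1}{P_{n-1}}-\frac{1}{P_n},
\]
which rewrites the total mass of block $n$ as a difference of consecutive reciprocals.

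First I would compute the $\ell_1$-norm. Summing the identity above over all $n$ telescopes to $\|{\bf a}\|_1=1/P_0-\lim_n 1/P_n$; since every $k_i\ge 2$ forces $P_n\ge 2^n\to\infty$, the limit is $0$ and $\|{\bf a}\|_1=1$. In particular ${\bf a}\in\ell_1$ with sum exactly $1$, so for this sequence conditions \eqref{sumt_n} and \eqref{sumt_n-} coincide, and it suffices to check $a_m\le\sum_{k>m}a_k$ for every $m\ge 1$.

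The verification of \eqref{sumt_n-} is then a bookkeeping argument on the block structure. By the same telescoping, the tail mass contributed by all blocks strictly after block $n$ equals $\sum_{i>n}(1/P_{i-1}-1/P_i)=1/P_n$. Now fix an index $m$ lying in block $n$, say it is the $j$-th of the $k_n-1$ entries of that block, so that $a_m=1/P_n$ and $1\le j\le k_n-1$. The tail $\sum_{k>m}a_k$ splits into the $(k_n-1)-j$ remaining entries of block $n$, each equal to $1/P_n$, plus the mass $1/P_n$ of all later blocks, which gives $\sum_{k>m}a_k=(k_n-j)/P_n$. Since $j\le k_n-1$ we have $k_n-j\ge 1$, whence $a_m=1/P_n\le (k_n-j)/P_n=\sum_{k>m}a_k$, which is exactly \eqref{sumt_n-}.

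There is no substantial obstacle here beyond keeping the indexing of positions within blocks straight; the only subtlety worth flagging is that the inequality is tight precisely when $j=k_n-1$, i.e.\ at the final entry of each block, where \eqref{sumt_n-} becomes an equality. This boundary behavior is exactly what should make these sequences the candidates for the extreme points of $K$ in Theorem \ref{extreme}.
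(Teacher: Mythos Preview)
Your proof is correct and follows essentially the same approach as the paper: both arguments rest on the telescoping identity $(k_n-1)/(k_1\cdots k_n)=1/(k_1\cdots k_{n-1})-1/(k_1\cdots k_n)$ to compute the total mass and the tail sums. Your version is a bit more explicit in tracking the position $j$ within a block when verifying \eqref{sumt_n-}, whereas the paper leaves that step to the reader, but the underlying computation is identical.
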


\begin{proof} Let us first show that
\begin{align*}
\sum\limits_{n=1}^\infty\frac{k_n-1}{k_1\cdots k_n}=1.
\end{align*}
We will check that
\begin{align}\label{sn}
s_n & = \sum\limits_{i=1}^n\frac{k_i-1}{k_1\cdots k_i}=1-\frac{1}{k_1\cdots k_n}
\end{align}
for all $n\ge 1.$ Taking into account
\begin{align*}
\frac{k_i-1}{k_1\cdots k_i} & = \frac{1}{k_1\cdots k_{i-1}}-\frac{1}{k_1\cdots k_i},\, i\ge 2,
\end{align*}
we obtain that
\begin{align*}
s_n & = \left(1-\frac{1}{k_1}\right)+\left(\frac{1}{k_1}-\frac{1}{k_1 k_2}\right)+\ldots +\left(\frac{1}{k_1\cdots k_{n-1}}-\frac{1}{k_1\cdots k_n}\right)=
1-\frac{1}{k_1\cdots k_n},
\end{align*}
which gives us \eqref{sn}. Equality \eqref{sn} also shows that
\begin{align*}
\sum\limits_{m\ge 2}\frac{k_m-1}{k_1\cdots k_m}= \frac{1}{k_1\cdots k_{n-1}}
\end{align*}
for all $n\ge 2,$ which implies that ${\bf a}$ satisfies \eqref{sumt_n}.
The proof is completed.
\end{proof}

Let $E$ be a linear space and let $Q$ be its convex subset.
Recall that a convex subset $F$ of $Q$ is called
a face of $Q$ if
for any
 $x\in F$ and $y,z \in  Q,$ $0<\lambda<1$ such that $x=\lambda y+(1-\lambda)z,$
we have  $y,z \in F.$

For a fixed  integer $m\ge 2$ set
\begin{align*}
K_m & =\left\{{\bf a}=(a_n)_{n\ge 1}\in K: a_1=\ldots=a_{m-1}=\frac{1}{m}\right\}.
\end{align*}
Setting $k_n=m$ for all $n\ge 1$ in \eqref{k-1}, we see that ${\bf a}\in K_m.$ Furthermore, $K_m$ is a non-empty
convex compact subset in $K.$

\begin{proposition}\label{face}
$K_m$ is a face of $K.$
\end{proposition}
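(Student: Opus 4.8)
The plan is to realize $K_m$ as the set on which a single coordinate functional attains its maximum over $K$, and then verify the face property straight from the definition. The whole argument rests on one a priori estimate, so I would isolate that first.

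\emph{Step 1 (the key bound).} First I would show that $a_{m-1}\le \frac1m$ for \emph{every} $\mathbf{a}=(a_n)\in K$, with equality forcing $\mathbf{a}\in K_m$. Since $\mathbf{a}$ is non-increasing and positive, each of $a_1,\dots,a_{m-2}$ is at least $a_{m-1}$, whence
\[
m\,a_{m-1}=\sum_{i=1}^{m-2} a_{m-1}+2a_{m-1}\le \sum_{i=1}^{m-2}a_i+2a_{m-1}=\Big(\sum_{i=1}^{m-1}a_i\Big)+a_{m-1}\le 1,
\]
the last inequality being exactly \eqref{sumt_n} at $n=m-1$. This gives $a_{m-1}\le\frac1m$. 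If moreover $a_{m-1}=\frac1m$, then $m\,a_{m-1}=1$ and both inequalities above are equalities; the first, namely $\sum_{i=1}^{m-2}(a_i-a_{m-1})=0$ with each summand $\ge 0$, forces $a_i=a_{m-1}=\frac1m$ for all $i\le m-2$. Thus I obtain the clean characterization $K_m=\{\mathbf{a}\in K: a_{m-1}=\tfrac1m\}=\{\mathbf{a}\in K: a_{m-1}\ge\tfrac1m\}$.

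\emph{Step 2 (the face property).} Given this, the face condition is immediate. Suppose $\mathbf{x}\in K_m$ and $\mathbf{x}=\lambda\mathbf{y}+(1-\lambda)\mathbf{z}$ with $\mathbf{y},\mathbf{z}\in K$ and $0<\lambda<1$. Comparing the $(m-1)$st coordinates gives $\frac1m=x_{m-1}=\lambda y_{m-1}+(1-\lambda)z_{m-1}$; since $y_{m-1},z_{m-1}\le\frac1m$ by Step 1 and $\lambda\in(0,1)$, a convex combination can reach the common upper bound $\frac1m$ only if $y_{m-1}=z_{m-1}=\frac1m$. By the characterization in Step 1 this yields $\mathbf{y},\mathbf{z}\in K_m$, so $K_m$ is a face. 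Equivalently, $K_m$ is the \emph{exposed} face cut out by the weak-$*$-continuous coordinate functional $\mathbf{a}\mapsto a_{m-1}=\langle \mathbf{a},e_{m-1}\rangle$ with $e_{m-1}\in c_0$, whose supremum over the convex weak-$*$-compact set $K$ equals $\frac1m$ and is attained precisely on $K_m$; exposed faces are faces.

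\emph{Main obstacle.} The only delicate point is the equality analysis in Step 1: one must ensure that tightness of the \emph{single} aggregated inequality $\sum_{i=1}^{m-1}a_i+a_{m-1}=1$ propagates back to pin down each individual coordinate $a_1,\dots,a_{m-1}$. This is precisely where the non-increasing hypothesis is indispensable — it is what converts the one tight constraint into $a_1=\cdots=a_{m-1}=\frac1m$. Non-emptiness and convex compactness of $K_m$ are already recorded in the statement, and Proposition \ref{inK} (with $k_n=m$ for all $n$) supplies an explicit point of $K_m$, so no further work is needed there.
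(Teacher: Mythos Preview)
Your argument is correct and uses the same two ingredients as the paper --- monotonicity together with \eqref{sumt_n} to get the bound $a_{m-1}\le \tfrac1m$, and the convex-combination-attains-the-maximum trick --- but you organize them more cleanly. The paper splits into cases $m=2$ and $m\ge 3$ and, for $m\ge 3$, first uses \emph{all} $m-1$ coordinate equalities $\lambda b_i+(1-\lambda)c_i=\tfrac1m$ together with monotonicity of $\mathbf b,\mathbf c$ to force $b_1=\cdots=b_{m-1}$ (and similarly for $\mathbf c$), and only then invokes the bound. You instead prove once and for all that $a_{m-1}\le\tfrac1m$ on $K$ with equality characterizing $K_m$, so that a single coordinate suffices and $K_m$ is visibly the exposed face of the weak-$*$ continuous functional $\mathbf a\mapsto a_{m-1}$. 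This buys you a case-free proof and a structural explanation (exposed $\Rightarrow$ face); the paper's version, by contrast, stays closer to the raw definition and does not isolate the characterization $K_m=\{\mathbf a\in K: a_{m-1}=\tfrac1m\}$.
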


\begin{proof} Let ${\bf b}, {\bf c} \in K$ and $0< \lambda <1$ be such that $\lambda {\bf b} +(1-\lambda){\bf c}\in K_m.$

Let us first consider a case $m=2.$ By \eqref{sumt_n} we have that $b_1\le 1-b_1,$ that is, $b_1\le \frac{1}{2}.$
Likewise, $c_1\le \frac{1}{2}.$ Since $\lambda b_1+(1-\lambda)c_1=\frac{1}{2},$ it follows that $b_1=c_1=\frac{1}{2}.$ This means that
${\bf b}, {\bf c}\in K.$

Now suppose that $m\ge 3.$ We have that $\lambda b_i+(1-\lambda)c_i=\frac{1}{m}$ for all $i=1, \ldots, m-1.$
Note that $b_1\ge \ldots \ge b_{m-1}$ and $c_1\ge \ldots \ge c_{m-1}.$
Assume that $b_i>b_{i+1}$ for some $i\in \{1, \ldots, m-1\}.$
Then
\begin{align*}
\frac{1}{m} & = \lambda b_i+(1-\lambda)c_i> \lambda b_{i+1}+(1-\lambda)c_{i+1}=\frac{1}{m},
\end{align*}
which gives us a contradiction. Hence,  $b_1=\ldots=b_{m-1}$ and $c_1=\ldots =c_{m-1}.$
Using \eqref{sumt_n} we obtain  that
\begin{align*}
b_{m-1} & \le 1-\sum\limits_{i=1}^{m-1} b_i=1-(m-1)b_{m-1},
\end{align*}
that is,
$b_1=\ldots =b_{m-1}\le \frac{1}{m}.$ Likewise, $c_1=\ldots =c_{m-1}\le \frac{1}{m}.$ Taking into account
$\lambda b_1+(1-\lambda)c_1=\frac{1}{m},$
we obtain that
\[
b_1=c_1=\ldots =b_{m-1}=c_{m-1}=\frac{1}{m}.
\]
This means that ${\bf b}, {\bf c} \in K_m,$ that is, $K_m$ is a face. The proof is completed.
\end{proof}

Let $m\ge 2.$ Define a mapping $\Pi_m: K\to K_m$ as follows
\begin{align}\label{aff}
\Pi_m({\bf a})=\left(\underbrace{\frac{1}{m}, \ldots, \frac{1}{m}}_{m-1}, \frac{1}{m}a_1, \frac{1}{m}a_2, \ldots\right),\, {\bf a}\in K.
\end{align}

The next property directly follows from the definition.
The mapping $\Pi_m$ defined as in \eqref{aff} is an affine isomorphism from $K$ onto $K_m.$ The inverse  of the mapping defined as in \eqref{aff} is in the form
\begin{align}\label{aff-}
{\bf a}=\left(\frac{1}{m}, \ldots, \frac{1}{m}, a_m, \ldots\right)  \in K_m
\mapsto \Pi_m^{-1}({\bf a})=\left(ma_m, ma_{m+1}, \ldots\right)\in K.
\end{align}

\begin{proposition}\label{crit}
Let ${\bf a}\in K$ be such that $a_1=\ldots =a_{m-1}>a_m,$ where $m\ge 2.$ Then the following assertions are equivalent.
\begin{itemize}
\item[(a)] ${\bf a}\in {\rm ext}K;$
\item[(b)] $a_1=\ldots =a_{m-1}=\frac{1}{m}$ and $\left(ma_m, ma_{m+1}, \ldots\right)\in {\rm ext}K.$
\end{itemize}
\end{proposition}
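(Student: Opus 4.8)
The plan is to deduce everything from two structural inputs already in hand: that $K_m$ is a face of $K$ (Proposition \ref{face}) and that the map $\Pi_m$ of \eqref{aff} is an affine isomorphism $K \to K_m$ with inverse \eqref{aff-}. I will freely use two standard convexity facts: an affine isomorphism carries extreme points to extreme points, and if $F$ is a face of a convex set $Q$ then ${\rm ext}\,F = F \cap {\rm ext}\,Q$. Granting these, the implication (b) $\Rightarrow$ (a) is immediate: the hypothesis $a_1=\dots=a_{m-1}=\frac1m$ says precisely that ${\bf a}\in K_m$, and $\Pi_m^{-1}({\bf a})=(ma_m,ma_{m+1},\dots)\in{\rm ext}\,K$ by assumption, so applying $\Pi_m$ gives ${\bf a}=\Pi_m(\Pi_m^{-1}({\bf a}))\in{\rm ext}\,K_m=K_m\cap{\rm ext}\,K\subseteq{\rm ext}\,K$.

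For (a) $\Rightarrow$ (b) I first observe that the bulk of the statement is formal. If ${\bf a}\in{\rm ext}\,K$ happens to lie in $K_m$, then a convex decomposition of ${\bf a}$ inside $K_m\subseteq K$ is in particular one inside $K$, so ${\bf a}\in{\rm ext}\,K_m$, whence $\Pi_m^{-1}({\bf a})=(ma_m,ma_{m+1},\dots)\in{\rm ext}\,K$ because $\Pi_m^{-1}$ is an affine isomorphism; this is exactly (b). Thus the entire content of (a) $\Rightarrow$ (b) is the claim that an extreme point with $a_1=\dots=a_{m-1}>a_m$ must satisfy $a_1=\frac1m$, i.e. must lie in $K_m$. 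Writing $\alpha:=a_1=\dots=a_{m-1}$ and applying \eqref{sumt_n} at $n=m-1$ gives $\alpha\le 1-(m-1)\alpha$, that is $\alpha\le\frac1m$, so it remains only to exclude $\alpha<\frac1m$.

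I exclude it by contraposition: assuming $\alpha<\frac1m$ I produce a nonzero ${\bf v}\in\ell_1$ with ${\bf a}\pm\delta{\bf v}\in K$ for all small $\delta>0$, so that ${\bf a}=\frac12({\bf a}+\delta{\bf v})+\frac12({\bf a}-\delta{\bf v})$ is a nontrivial midpoint, contradicting extremality. The direction ${\bf v}$ is taken constant on each maximal constancy block of ${\bf a}$, so that ${\bf a}\pm\delta{\bf v}$ remains non-increasing (the jumps between distinct block values are strict, hence survive a small perturbation). On the first block the value of ${\bf v}$ is a fixed positive number, which is legitimate precisely because $\alpha<\frac1m$ forces the constraints \eqref{sumt_n} for $n\le m-1$ to be strict, so the first block value is free to first order. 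On later blocks the value of ${\bf v}$ is chosen recursively so as to keep every instance of \eqref{sumt_n} that is an equality for ${\bf a}$ — and, when $\sum_k a_k=1$, the normalization as well — still valid for ${\bf a}\pm\delta{\bf v}$. (The degenerate case $a_m=0$, where the tail vanishes and $\sum_k a_k=(m-1)\alpha<1$, is immediate: raising the first block alone by $\pm\delta$ stays in $K$.)

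The main obstacle is exactly this last verification: checking ${\bf a}\pm\delta{\bf v}\in K$ against \eqref{sumt_n} for all $n$ and both signs at once. The structural observation that makes it go through is that an equality $a_n=1-\sum_{k=1}^n a_k$ can only occur at the end of a constancy block of ${\bf a}$, since an equality in the interior of a block would force the block to terminate with vanishing tail. Imposing the first-order condition $v_n+\sum_{k=1}^n v_k=0$ at these block ends yields a lower-triangular system for the block values of ${\bf v}$; solving it makes the partial sums $\sum_{k=1}^{n}v_k$ contract by a factor $(1+\ell)^{-1}\le\frac12$ at each tight block of length $\ell\ge 1$. This contraction both guarantees ${\bf v}\in\ell_1$ and provides uniform first-order slack at the non-tight constraints, so ${\bf a}\pm\delta{\bf v}\in K$ for all sufficiently small $\delta$, which is the desired contradiction and completes the reduction.
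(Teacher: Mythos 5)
Your handling of (b)$\Rightarrow$(a) and of the formal half of (a)$\Rightarrow$(b) is correct and is essentially the paper's argument: everything there reduces to Proposition~\ref{face} together with the fact that $\Pi_m$ is an affine isomorphism (the paper re-runs the face computation inline instead of quoting ${\rm ext}\,F=F\cap{\rm ext}\,Q$, but the content is the same). The problem is the one substantive step, ruling out $a_1=\dots=a_{m-1}=\alpha<\frac1m$ for an extreme point, where you replace the paper's construction by an additive block-constant perturbation ${\bf a}\pm\delta{\bf v}$ pinned only at the tight instances of \eqref{sumt_n}. This construction does not work: a single $\delta>0$ must survive \emph{infinitely many} strict constraints (the non-tight instances of \eqref{sumt_n}, monotonicity at every block junction, positivity, and the norm bound), and strictness of each one gives no uniform slack. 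Two concrete failures: (i) take $a_n=\frac13\left(\frac23\right)^{n-1}$, so $\|{\bf a}\|_1=1$, $a_1=\frac13<\frac12$, and every instance of \eqref{sumt_n} is strict (the slack at $n$ is $\frac12\left(\frac23\right)^n$); your tight-constraint system is then empty, the recursion produces ${\bf v}=(c,0,0,\dots)$, and $\|{\bf a}+\delta{\bf v}\|_1=1+\delta c>1$, so ${\bf a}+\delta{\bf v}\notin K$ --- your parenthetical only covers $a_m=0$, not this case. (ii) Even when there are infinitely many tight constraints, the claim that the contraction ``provides uniform first-order slack at the non-tight constraints'' is false: define tails by $T_0=1$, $T_n=\frac12(T_{n-1}+\sigma_n)$, $a_n=T_{n-1}-T_n$, with $\sigma_1=\frac1{10}$, $\sigma_n=0$ for even $n$, and $\sigma_n=10^{-4^n}$ for odd $n\ge3$. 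This ${\bf a}$ is strictly decreasing, lies in $K$ with $\|{\bf a}\|_1=1$ and $a_1=\frac9{20}<\frac12$, and is tight exactly at even $n$; your recursion gives partial sums $S_{2k+1}=c2^{-k}$, so the instance of \eqref{sumt_n} at $n=2k+1$ for ${\bf a}+\delta{\bf v}$ demands $\delta c\,2^{-k}\le 10^{-4^{2k+1}}$, impossible for any fixed $\delta>0$. The same phenomenon undercuts your monotonicity remark: there are infinitely many junctions, and the gaps between consecutive block values can decay arbitrarily fast relative to the increments of ${\bf v}$ your recursion produces.

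The paper's proof avoids all of these quantitative issues by perturbing \emph{multiplicatively} instead of additively: it takes ${\bf b},{\bf c}$ with the first block scaled by $1\pm\delta$ and the entire tail scaled by $1\mp\varepsilon$, with $(m-1)\delta a=\varepsilon(1-(m-1)a)$ to preserve the norm. Because the constraints \eqref{sumt_n} at indices $n\ge m$ and the monotonicity of the tail are homogeneous under a common scaling of the tail, they are preserved automatically, with no slack needed; only two strict inequalities remain (the junction $(1-\delta)a_{m-1}\ge(1+\varepsilon)a_m$, using $a_{m-1}>a_m$, and the constraint at $n=m-1$, using $\alpha<\frac1m$), and these are satisfied by taking $\delta,\varepsilon$ small. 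That homogeneity trick is precisely what your additive scheme is missing, and without it (or some replacement that adapts ${\bf v}$ to the decay of the slacks, the block values, and the gaps simultaneously) the key step of (a)$\Rightarrow$(b) remains unproved.
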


\begin{proof} (a)$\Rightarrow$(b).  Let ${\bf a}\in {\rm ext}K$ and $a=a_1=\ldots =a_{m-1}>a_m.$ By \eqref{sumt_n} we have that
\[
a=a_{m-1}\le 1 - \sum\limits_{i=1}^{m-1}a_i=1 -(m-1)a,
\]
that is, $a\le \frac{1}{m}.$ Assume that  $a<\frac{1}{m}.$
We will find $0<\delta, \varepsilon<1$ such that
\[
{\bf b}=\left((1+\delta)a_1, \ldots, (1+\delta)a_{m-1}, (1-\varepsilon)a_m, (1-\varepsilon)a_{m+1},\ldots\right),
\]
\[
{\bf c}=\left((1-\delta)a_1, \ldots, (1-\delta)a_{m-1}, (1+\varepsilon)a_m, (1+\varepsilon)a_{m+1},\ldots\right),
\]
both should be  in $K.$ We must take $\varepsilon$  and $\delta$ such that
\[
(m-1)\delta a=\varepsilon(1-(m-1)a),
\]
in order to guarantee $\|{\bf b}\|_1=\|{\bf c}\|_1=1.$
Taking into account  $a<1-(m-1)a$ and $a=a_{m-1}>a_m$ we can choose these numbers such that
\begin{align*}
(1+\delta)a_{m-1} & \le (1-\varepsilon)(1-(m-1)a),\,\, (1-\delta)a_{m-1}\ge (1+\varepsilon)a_m.
\end{align*}
The last two conditions ensure that both ${\bf b}$ and ${\bf c}$
are non-increasing sequence and  satisfy~\eqref{sumt_n}.

Since ${\bf a}=\frac{1}{2}({\bf b}+{\bf c})$ and ${\bf b}\neq {\bf c},$ we obtain a contradiction with that
${\bf a}\in {\rm ext}K.$ So, $a_1=\ldots =a_{m-1}=\frac{1}{m}.$

Since affine isomorphism preserves extreme points of convex sets,  by Proposition~\ref{face}, we obtain that
$\left(ma_m, ma_{m+1}, \ldots\right)=\Pi_m^{-1}({\bf a})\in {\rm ext}K.$

(b)$\Rightarrow$(a). Let ${\bf a}\in K$ be such that
$a_1=\ldots =a_{m-1}=\frac{1}{m}$ and ${\bf x}=\left(ma_m, ma_{m+1}, \ldots\right)\in {\rm ext}K.$
Let ${\bf b}, {\bf c} \in K$ and $0< \lambda <1$ be such that ${\bf a}=\lambda {\bf b} +(1-\lambda){\bf c}.$
By a similar way as in the proof of Proposition~\ref{face} we obtain that
\begin{align*}
b_1=c_1=\ldots =b_{m-1}=c_{m-1}=\frac{1}{m}.
\end{align*}
This means that ${\bf b}, {\bf c}\in K_m.$
Taking into account that
\begin{align*}
{\bf x} & =\Pi_m^{-1}({\bf a})=\lambda\Pi_m^{-1}({\bf b})+(1-\lambda)\Pi_m^{-1}({\bf c}),
\end{align*}
${\bf x}=\left(ma_m, ma_{m+1}, \ldots\right)\in {\rm ext}K$
 and $\Pi_m$ is affine isomorphism, we obtain that
$\Pi_m^{-1}({\bf b})=\Pi_m^{-1}({\bf c}).$ Hence, ${\bf a}={\bf b}={\bf c},$ that is,
${\bf a}\in {\rm ext}K.$ The proof is completed.
\end{proof}

Now we are in position to present the proof of Theorem~\ref{extreme}.

\begin{proof}[Proof of Theorem \ref{extreme}]
'if' part. Let ${\bf a}\in K$ be in the form \eqref{k-1} and let ${\bf a}=\lambda {\bf b}+(1-\lambda){\bf c},$ where
${\bf b}, {\bf c}\in K,$ $0<\lambda<1.$ Since ${\bf a}\in K_{k_1}$ and $K_{k_1}$ is a face, it follows that
${\bf b}, {\bf c}\in K_{k_1}.$ Thus $b_1=c_1=\ldots=b_{k_1-1}=c_{k_1-1}=\frac{1}{k_1}.$
Then
$\Pi_{k_1}^{-1}({\bf a})$ is also in the form \eqref{k-1}, that is, $\Pi_{k_1}^{-1}({\bf a})\in K_{k_2},$ where $k_2\ge 2.$
Since $\Pi_{k_1}^{-1}({\bf a})=\lambda \Pi_{k_1}^{-1}({\bf b})+(1-\lambda)\Pi_{k_1}^{-1}({\bf c})\in K_{k_2}$ and $K_{k_2}$ is a face, we obtain that $\Pi_{k_1}^{-1}({\bf a})=\Pi_{k_1}^{-1}({\bf b})=\Pi_{k_1}^{-1}({\bf c}),$ in particular,
\begin{align*}
b_{k_1}=c_{k_1}=\ldots=b_{k_2-1}=c_{k_2-1}=\frac{1}{k_1k_2}.
\end{align*}
Continuing by this way we can find a sequence of integers $\{k_n\}_{n\ge 1}$ such that
$\Pi_{k_n}^{-1}\cdots \Pi_{k_1}^{-1}({\bf a})=\Pi_{k_n}^{-1}\cdots \Pi_{k_1}^{-1}({\bf b})=\Pi_{k_n}^{-1}\cdots \Pi_{k_1}^{-1}({\bf c})$ for all $n\ge 1.$ Thus
\begin{align*}
b_{k_n}=c_{k_n}=\ldots=b_{k_{n+1}-1}=c_{k_{n+1}-1}=\frac{1}{k_1\cdots k_{n+1}}
\end{align*}
for all $n\ge 1,$ that is, ${\bf b}={\bf c}={\bf a}.$ This means that  ${\bf a}\in {\rm ext}K.$

'only if' part. Let ${\bf a}\in {\rm ext}K.$ Let $k_1\ge 2$ be an integer  such that
$a_1=\ldots =a_{k_1-1}>a_{k_1}.$ By Proposition~\ref{crit} we obtain that
$a_1=\ldots =a_{m-1}=\frac{1}{k_1}$ and $\left(k_1a_{k_1}, k_1a_{k_1+1}, \ldots\right)\in {\rm ext}K.$
Now we can find an integer $k_2\ge 2$ such that
$k_1a_{k_1}=\ldots =k_1a_{k_2-1}>k_1a_{k_2}.$ Again using Proposition~\ref{crit} we obtain that
$k_1a_{k_1}=\ldots =k_1a_{k_1-1}=\frac{1}{k_2}$ and $\left(k_2k_1a_{k_2}, k_2k_1a_{k_2+1}, \ldots\right)\in {\rm ext}K.$
So,
\begin{align*}
a_1=\ldots =a_{k_1-1}=\frac{1}{k_1},\,\,\, a_{k_1}=\ldots =a_{k_2-1}=\frac{1}{k_1k_2}.
\end{align*}
Continuing by this way we obtain that ${\bf a}$ is in the form \eqref{k-1}.
The proof is completed.
\end{proof}

\begin{remark} Setting $k_n=2,$ $n\ge 1$  in \eqref{k-1} we obtain that
\begin{align*}
{\bf a} & =\left(\frac{1}{2}, \ldots, \frac{1}{2^n}\ldots\right)\in {\rm ext}K.
\end{align*}
 Then \eqref{dec-r} rewrites as follows
\begin{align*}
r=\sum\limits_{n=1}^\infty \frac{\varepsilon_n}{2^n}, \,\,\, \varepsilon_n \in \{0,1\}, n\ge 1,
\end{align*}
which gives the dyadic expansion of real number $r\in [0,1].$

Further, if we take $k_n=m,$ $n\ge 1,$ where $m \ge 2,$   using \eqref{k-1}, from  \eqref{dec-r} we obtain that
\begin{align*}
r=\sum\limits_{n=1}^\infty \frac{\epsilon_n}{m^n}, \,\,\, \epsilon_n \in \{0,1, \ldots, m-1\}, n\ge 1,
\end{align*}
which gives the $m$-ary expansion of real number $r\in [0,1].$

In general, if we take an arbitrary sequence $k_1, \ldots, k_n, \ldots$ with $k_n\ge 2,$ $n\ge 1,$
\eqref{dec-r} implies that
\begin{align*}
r=\sum\limits_{n=1}^\infty \frac{\epsilon_n}{k_1\cdots k_n}, \,\,\, \epsilon_n \in \{0,1, \ldots, k_n-1\}, n\ge 1.
\end{align*}
\end{remark}

\subsection*{Acknowledgment}

The second author was partially supported by the Russian Ministry of Education and Science, agreement no. 075-02-2025-1633.

\end{document}